\begin{document}
\title{Average performance analysis of the stochastic gradient method for online PCA}
%
%\titlerunning{Abbreviated paper title}
% If the paper title is too long for the running head, you can set
% an abbreviated paper title here
%
\author{St\'ephane Chr\'etien \inst{1} \and
Christophe Guyeux\inst{2} \and
Zhen-Wai Olivier Ho\inst{3}}
\authorrunning{S. Chr\'etien et al.}
% First names are abbreviated in the running head.
% If there are more than two authors, 'et al.' is used.
%
\institute{National Physical Laboratory, Teddington, UK,
\email{stephane.chretien@npl.co.uk}\\
\url{https://sites.google.com/view/stephanechretien} \and
FEMTO-ST Institute, UMR 6174 CNRS, France
\email{christophe.guyeux@univ-fcomte.fr}
\and 
LMB Universit\'e de Bourgogne Franche-Comt\'e, 16 route de Gray, 25030, Besan{\c c}on, France.
\email{zhen-wai-olivier@univ-fcomte.fr}
}
\maketitle              % typeset the header of the contribution
\begin{abstract}
This paper studies the complexity of the stochastic gradient algorithm for PCA when the data are observed in a streaming setting. We also propose an online approach for selecting  the learning rate. Simulation experiments confirm the practical relevance of the plain stochastic gradient approach and that drastic improvements can be achieved by learning the learning rate.  
\keywords{Stochastic gradient \and online PCA \and non-convex optimisation\and average case analysis.}
\end{abstract}
\section{Introduction}
\subsection{Background}
Principal Component Analysis (PCA) 
is a paramount tool in an amazingly wide scope of applications. PCA belongs to the small list of algorithms which are extensively used in data science, medicine, finance, machine learning, etc. and the list is almost infinite. PCA is one of the basic blocks in the Geometric Science of Information. Computing singular/eigenvectors easily provides nonlinear embedding of data living on low dimensional manifold
in a straightforward manner \cite{bandeira2015ten}. The other main geometric aspect of PCA lies in the fact that eigenvectors belong to the sphere and orthogonal families of eigenvectors belong to the Stiefel manifold, an information that we should take into account when computing these objects.

In the era of Big Data though, computing a set of singular vectors might turn to be a formidable task to achieve in practice since in many cases, one is not even able to store the data matrix itself in the RAM, not even mentioning running an algorithm on it. 
In the recent years, the need to handle massive datasets has revived a tremendous soaring of online techniques and algorithms which incorporate the data in an incremental fashion.
Online convex optimisation is now a thriving field for dozens of important contributions a year, and
a remarkable impact on the way statistical estimation and machine learning is undertaken in practice \cite{hazan2016introduction,sra2012optimization,shalev2012online}. On the other hand, however, PCA lives in yet another realm, which cannot be directly reached using the techniques recently developed for convex optimisation. PCA can be performed using optimisation over the sphere and online versions of this nonconvex optimisation problem. Online or stochastic version of PCA have been extensively studied quite recently; see in particular the review \cite{cardot2015online} for a thorough analysis of the practical performance of online methods for PCA. On the theoretical side,  \cite{shamir2015stochastic,shamir2015convergence,jin2015robust,allen2016lazysvd} propose very interesting results about the behavior of stochastic gradient type algorithms with different implementation details and under various assumptions. In particular, \cite{shamir2015convergence} provides a very elegant approach to the analysis of the stochastic projected gradient descent without any assumption on the spectral gap between the largest eigenvalue and the second largest eigenvalue. 

\subsection{Our contribution}
The goal of the present paper is to study the online version of the stochastic gradient algorithm for PCA. In the setting we are interested in, the entries of the matrix we want to PCA are observed online, i.e. one empirical correlation coefficient at a time. Our two main contributions are the following.
\begin{itemize}
\item We extend the analysis presented in \cite{shamir2015convergence} to the online setting. In particular, we obtain a precise control on the average performance of the online method which does not depend on the separation between the first and the second eigenvalue. 

\item We provide a practical method to tune the learning rate, i.e. the stepsize of the gradient algorithm, based on a recent version proposed in \cite{luo2015achieving} of the Hedge Algorithm \cite{freund1997decision}. 

\end{itemize}
\subsection{Organisation of the paper}

Our main results are presented in Section \ref{mainres} where the algorithm is described and our main theorem is given. 
The proof of our main theorem is exposed in Section \ref{proof}. Implementation and numerical experiments are given in Section \ref{Imp}. In particular, a simple method for choosing the learning rate is described in 
Section \ref{Hedge}. The technical {lemm\ae} which are used in the proof of Section \ref{proof} are gathered in Section 
\ref{app} at the end of the paper.

\section{Main results}
\label{mainres}
\subsection{The stochastic projected gradient algorithm}
Given a symmetric matrix $A \in \mathbb R^{d\times d}$, the projected gradient algorithm writes 
\begin{align}
	w_{t+1} & = (I+\eta A) w_t/\Vert (I+\eta A) w_t\Vert.
\end{align}
The stochastic gradient we will study in this paper is simply defined as 
\begin{align}
w_{t+1} & = (I+\eta A_t) w_t /\Vert (I+\eta A_t) w_t\Vert
\label{stochstep}
\end{align}
where $A_t$ is defined as 
\begin{align}
    A_t &  = d^2 \ A_{i_t,j_t}\ e_{i_t} e_{j_t}^*
\end{align}
and $(i_t,j_t)$ is drawn uniformly at random from $\{1,\ldots,n\}^2$.

\subsection{Main theorem}

\begin{theorem}
\label{main}
Let $\varepsilon>0$ and assume that $\frac{1}{p}<\langle w_0, v\rangle$ for a leading eigenvector $v$ of $A$.
Define 
\begin{align}
V_T & = w_0^* \ \prod_{i=T}^1 (I+\eta A_t)^\ast ((1-\varepsilon)I-A)\prod_{t=1}^T (I+\eta A_t) \ w_0.
\end{align}
Then for $T$ satisfying 
\begin{align}
    T>\max\left(\frac{4p^2 d^2}{\varepsilon}, \frac{\log 4p\varepsilon^{-1}}{\log\left(1+\frac{\varepsilon}{pd^2}\right)}\right),
\end{align}
and $\eta=\frac{\varepsilon}{4pd^2}$, it holds that
\begin{align}
    \mathbb{E}[V_T]\leq -\frac{\varepsilon}{4p}(1+2\eta)^T.
\end{align}
\label{mainth}
\end{theorem}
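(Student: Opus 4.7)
The approach is to compute $\mathbb{E}[V_T]$ by iterated conditioning, decompose it into a deterministic ``drift'' and a stochastic ``noise'', and calibrate both against the prescribed choices of $\eta$ and $T$. Setting $W_T=\prod_{t=1}^T(I+\eta A_t)\,w_0$ and $B:=(1-\varepsilon)I-A$, one has $V_T=W_T^\ast B\,W_T$. Since $\mathbb{E}[A_t]=A$, one step of expectation defines the operator
\begin{equation*}
\mathcal{L}(X):=\mathbb{E}\bigl[(I+\eta A_t)^\ast X(I+\eta A_t)\bigr]=X+\eta(AX+XA)+\eta^2\,C(X),\quad C(X):=\mathbb{E}[A_t^\ast XA_t].
\end{equation*}
Because $B$ commutes with $A$, this simplifies to $\mathcal{L}(B)=B(I+2\eta A)+\eta^2C(B)$, and by the tower property $\mathbb{E}[V_T]=w_0^\ast\mathcal{L}^T(B)w_0$. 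Unfolding the recursion yields the splitting
\begin{equation*}
\mathcal{L}^T(B)=B(I+2\eta A)^T+\eta^2\sum_{k=0}^{T-1}\mathcal{L}^{T-1-k}\!\bigl(C(B(I+2\eta A)^k)\bigr).
\end{equation*}

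The first (deterministic) term is the easy one. In the eigenbasis $A=\sum_i\lambda_iv_iv_i^\ast$, with the leading eigenvalue $\lambda_1$ normalised to $1$ and $v=v_1$,
\begin{equation*}
w_0^\ast B(I+2\eta A)^Tw_0=\sum_i(1-\varepsilon-\lambda_i)(1+2\eta\lambda_i)^T\,\langle w_0,v_i\rangle^2.
\end{equation*}
The $i=1$ term equals $-\varepsilon(1+2\eta)^T\langle w_0,v\rangle^2\le-\frac{\varepsilon}{p^2}(1+2\eta)^T$ by the hypothesis $\langle w_0,v\rangle>1/p$. The residual sum over $\lambda_i<1$ is bounded in absolute value by $(1-\varepsilon)\|w_0\|^2\le 1$ (using $(1+2\eta\lambda_i)^T\le(1+2\eta)^T$), and the second threshold $T\log(1+\varepsilon/(pd^2))>\log(4p\varepsilon^{-1})$ is tuned precisely so that this $O(1)$ residual is subdominant to the leading $\frac{\varepsilon}{p}(1+2\eta)^T$ contribution.

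The second (stochastic) term is the main technical obstacle. From $|A_{ij}|\le1$ and $\|A_t\|\le d^2$ one derives operator-norm bounds of the form $\|C(X)\|\le O(d^a)\|X\|$ and $\|\mathcal{L}^j(Y)\|\le(1+O(\eta))^j\|Y\|$ for a suitable exponent $a$. Combined with $\|B(I+2\eta A)^k\|\le(1+2\eta)^k$, each of the $T$ noise summands is of order $\eta^2d^a(1+2\eta)^T$, so the total is bounded by $T\eta^2d^a(1+2\eta)^T$. The calibrations $\eta=\varepsilon/(4pd^2)$ and $T>4p^2d^2/\varepsilon$ are designed precisely so that this cumulative variance is at most $\frac{\varepsilon}{4p}(1+2\eta)^T$, and combining with the drift bound yields
\begin{equation*}
\mathbb{E}[V_T]\le-\frac{\varepsilon}{2p}(1+2\eta)^T+\frac{\varepsilon}{4p}(1+2\eta)^T=-\frac{\varepsilon}{4p}(1+2\eta)^T,
\end{equation*}
as claimed.

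The hardest part will be pinning down the correct power of $d$ in the noise estimate and checking that iterating $\mathcal{L}$ on $C(\cdot)$ does not blow up: since $C$ is not diagonal in the eigenbasis of $A$, the interaction between $\mathcal{L}^j$ and $C$ requires the operator-norm lemmata presumably gathered in Section~\ref{app}, and this is exactly where the two thresholds on $T$ — one for drift domination, one for noise absorption — get calibrated against the step size $\eta=\varepsilon/(4pd^2)$.
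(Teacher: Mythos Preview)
Your overall skeleton---tower property, $\mathbb{E}[V_T]=w_0^\ast\mathcal{L}^T(B)w_0$, Duhamel splitting into drift plus noise---matches the paper's. But two steps fail as written.

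\emph{Deterministic residual.} You assert that $\sum_{i\ge 2}(1-\varepsilon-\lambda_i)(1+2\eta\lambda_i)^T\langle w_0,v_i\rangle^2$ is bounded in absolute value by $(1-\varepsilon)\Vert w_0\Vert^2\le 1$, citing $(1+2\eta\lambda_i)^T\le(1+2\eta)^T$. That inequality yields at best a bound of order $(1+2\eta)^T$, not $1$: for $\lambda_i$ just below $1-\varepsilon$ the factor $(1+2\eta\lambda_i)^T$ is exponentially large while $1-\varepsilon-\lambda_i$ is still positive. The paper handles this via Lemma~\ref{lk}, which exploits the trade-off between the two factors to obtain $\max_{s\in[0,1]}(1+2\eta s)^T(1-\varepsilon-s)\le 1+(1+2\eta(1-\varepsilon))^T/(\eta(T+1))$; the extra $1/(\eta T)$ is precisely what makes the first threshold $T>4p^2d^2/\varepsilon$ suffice. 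Without this lemma your crude bound would force $T\gtrsim d^2\log(p/\varepsilon)/\varepsilon^2$, not the stated $1/\varepsilon$ dependence.

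\emph{Noise term.} You bound the total noise by $T\eta^2 d^a(1+2\eta)^T$ and then invoke the \emph{lower} bound $T>4p^2d^2/\varepsilon$ to absorb it. A lower bound on $T$ cannot control a quantity that grows with $T$; the conclusion must hold for all $T$ above threshold, so a $T$-uniform noise estimate is required. The paper obtains one by using the explicit structure $C(X)=d^2\,\mathrm{diag}(A\,\mathrm{diag}(X)\,A)$ (Lemma~\ref{esd}): only the diagonal of $\mathbb{E}[B_k]$ feeds back into the recursion, and Lemma~\ref{norm:diag} tracks the coupled evolution of $\Vert\mathbb{E}[B_k]\Vert$, $\Vert\mathbb{E}[B_k]\Vert_{1\to 2}$, and $\Vert\mathrm{diag}(\mathbb{E}[B_k])\Vert$ to argue the last stays $O(1)$ uniformly in $k$. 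Then $\sum_{i\ge 1}(1+2\eta)^{-i}\Vert\mathrm{diag}(\mathbb{E}[B_{i-1}])\Vert\le O(1)/(2\eta)$ by a geometric series, so the noise contribution after factoring out $(1+2\eta)^T$ is $O(\eta d^2)$ independently of $T$, and the choice $\eta=\varepsilon/(4pd^2)$ alone controls it.
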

Comparing our result with \cite{shamir2015convergence}, we first note that the online setting does not satisfy the hypothesis for which the framework in \cite{shamir2015convergence}
can be used. In fact, the matrices $A_t$ in the online setting are not positive semidefinite, otherwise the matrix $A$ would be a nonnegative matrix which is not the case. Regardless of this setback, \cite{shamir2015convergence} provides an upper bound for the expectation of $V_T$. In that case, \cite{shamir2015convergence} requires $T$ to be larger than $d^4p/\varepsilon^2 *c^2$ for some large enough constant $c$. Therefore, in practice, we obtain a better lower bound.
\section{Proof of the Theorem \ref{mainth}}
\label{proof}
In this section, we prove our main result, namely Theorem \ref{main}.
Define 
    \begin{align}
        B_T & = \prod_{t=T}^1 (I+\eta A_t)^* ((1-\epsilon)I-A) \prod_{t=1}^T (I+\eta A_t)
    \end{align}
    so that $V_t = w_0^*B_Tw_0$. We have the following recurrence relation. 
\begin{lemma}\label{recurrence}
    We have that
    \begin{align}
        \mathbb E [B_T] & = \mathbb E[B_{T-1}]
        +\eta \left(A^* \mathbb E[B_{T-1}]+\mathbb E[B_{T-1}] A \right) \nonumber \\
        \label{tototo}\\
        & \hspace{1cm}+\eta^2 d^2 \mathrm{diag}\left(A^*\mathrm{diag}(\mathbb E[B_{T-1}])A\right).\nonumber 
    \end{align}
\end{lemma}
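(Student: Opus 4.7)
The plan is to unroll the definition of $B_T$ by one step. The outermost factor on the left of the product is $(I+\eta A_T)^*$ and the outermost on the right is $(I+\eta A_T)$, so
$$B_T \;=\; (I+\eta A_T)^* \, B_{T-1} \, (I+\eta A_T),$$
and expanding produces the four terms
$$B_T \;=\; B_{T-1} \;+\; \eta\, A_T^* B_{T-1} \;+\; \eta\, B_{T-1} A_T \;+\; \eta^2\, A_T^* B_{T-1} A_T.$$

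Next I take expectation. Since $A_T$ is drawn independently of $A_1,\dots,A_{T-1}$ and $B_{T-1}$ is measurable with respect to $\mathcal F_{T-1}=\sigma(A_1,\dots,A_{T-1})$, the tower property lets me condition on $\mathcal F_{T-1}$ and replace $A_T$ by its conditional moments while treating $B_{T-1}$ as frozen. For the first moment, since $(i_T,j_T)$ is uniform on $\{1,\dots,d\}^2$,
$$\mathbb E[A_T] \;=\; \sum_{i,j}\tfrac{1}{d^2}\,(d^2 A_{i,j})\, e_i e_j^* \;=\; A,$$
and by symmetry of $A$ also $\mathbb E[A_T^*]=A^*$. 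This already accounts for the two linear-in-$\eta$ terms of the claimed recurrence.

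The real work is the quadratic term. Using $A_T = d^2 A_{i_T,j_T}\, e_{i_T} e_{j_T}^*$, for any deterministic matrix $M$ one has $A_T^* M A_T = d^4 A_{i_T,j_T}^2\, e_{j_T} e_{i_T}^* M e_{i_T} e_{j_T}^* = d^4 A_{i_T,j_T}^2\, M_{i_T,i_T}\, e_{j_T} e_{j_T}^*$, so averaging over $(i_T,j_T)$ gives
$$\mathbb E[A_T^* M A_T] \;=\; d^2 \sum_{i,j} A_{i,j}^2\, M_{i,i}\, e_j e_j^*,$$
a diagonal matrix whose $j$-th diagonal entry is $d^2\sum_i A_{i,j}^2 M_{i,i}$. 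A direct computation shows that $\bigl(A^*\mathrm{diag}(M) A\bigr)_{j,j}=\sum_i A_{i,j}^2 M_{i,i}$ as well, hence $\mathbb E[A_T^* M A_T] = d^2\,\mathrm{diag}\bigl(A^*\mathrm{diag}(M)A\bigr)$. Applying this with $M=B_{T-1}$ under the conditional expectation and then taking outer expectation yields the $\eta^2$ term of the claim.

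The main (purely combinatorial) obstacle is this second-moment calculation: one must track the rank-one structure $e_{j}e_{i}^*$ of $A_T^*$ to see how the inner factor $e_{i}^* M e_{i}=M_{i,i}$ collapses the middle onto $\mathrm{diag}(M)$, and how the outer $e_{j}e_{j}^*$ then projects the result back onto the diagonal; this is what accounts for the two nested $\mathrm{diag}(\cdot)$ operations appearing in the statement.
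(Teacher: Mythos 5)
Your proof is correct and follows essentially the same route as the paper: the paper's own proof is the one-line instruction ``expand the recurrence relationship and take the expectation,'' with the key second-moment identity $\mathbb E[A_T^* M A_T]=d^2\,\mathrm{diag}\bigl(A^*\mathrm{diag}(M)A\bigr)$ deferred to Lemma~\ref{esd} in the appendix, which you re-derive verbatim. Your write-up simply makes explicit the conditioning on $\mathcal F_{T-1}$ and the first-moment computation $\mathbb E[A_T]=A$ that the paper leaves implicit.
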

\begin{proof}
Expand the recurrence relationship and take the expectation.
\end{proof}    
Expanding the recurrence in Lemma~\ref{recurrence}, we have
\begin{align}
    \mathbb{E}[V_T] & \leq w_0^\ast(I+2\eta A)^T((1-\varepsilon)I-A)w_0 \nonumber \\
    & \quad +\eta^2 d^2\sum_{i=1}^T \sum_{i=1}^T(1+2\eta)^{T-i}\Vert \mathrm{diag}(\mathbb{E}[B_{i-1}])\Vert\Vert w_0\Vert^2_2.
\label{eq:1}
\end{align}
Using an eigendecomposition of $A$ and $\Vert w_0\Vert_2^2=1$ gives
\begin{align}\label{ineq:V}
    \mathbb{E}[V_T]&\leq \sum_{j=1}^d (1+2\eta s_j)^T(1-\varepsilon-s_j)w_{0,j}^2+\eta^2 d^2\sum_{i=1}^T (1+2\eta)^{T-i}\Vert \mathrm{diag}(\mathbb{E}[B_{i-1}])\Vert.
\end{align}
where $s_1>\cdots>s_d$ denote the eigenvalue of $A$ and $w_{0,j}=\langle w_0, v_j\rangle$ denotes the $j-th$ component of $w_0$ in the basis of the eigenvectors of $A$.
Since $s_1=1$, this inequality rewrites
\begin{align}
    \mathbb{E}[V_T]&\leq -\varepsilon(1+2\eta)^Tw_{0,1}^2 + \sum_{j=2}^d(1+2\eta s_j)^T(1-\varepsilon-s_j)w_{0,j}^2 \nonumber \\
    & + \eta^2 d^2 \sum_{i=1}^T(1+2\eta)^{T-i}\Vert \mathrm{diag}(\mathbb{E}[B_{i-1}])\Vert.
\end{align}
Lemma \ref{lk} gives
\begin{align}
    \mathbb{E}[V_T] & \leq -\varepsilon(1+2\eta)^Tw_{0,1}^2 +(1+\frac{(1+2\eta(1-\varepsilon))^T}{\eta(T+1)})
    \nonumber  \\
    & +\eta^2 d^2 \sum_{i=1}^T(1+2\eta)^{T-i}\Vert \mathrm{diag}(\mathbb{E}[B_{i-1}])\Vert.
\end{align}
Factoring out $(1+2\eta)^T$, the inequality writes
\begin{align}
     \mathbb{E}[V_T] &\leq (1+2\eta)^T\Big(-\varepsilon w_{0,1}^2 +\frac{1}{(1+2\eta)^T}+\frac{(1+2\eta(1-\varepsilon))^T}{(1+2\eta)^T\eta(T+1)} \nonumber \\
     & \quad \quad \quad  +\eta^2 d^2\sum_{i=1}^T(1+2\eta)^{-i} \Vert \mathrm{diag}(\mathbb{E}[B_{i-1}])\Vert \Big)
\end{align}
Lemma \ref{norm:diag} implies that 
\begin{align}
    \Vert \mathrm{diag}(\mathbb{E}[B_k])\Vert &\le 2\frac{\eta}{\eta d^2+1}\left(\frac{1}{1-\eta(\eta d^2+2)}-\frac{1}{1-\eta} \right)(1-\varepsilon) \nonumber \\
    & \hspace{-1cm} +2\frac{\eta}{\eta d^2+1}\Big(\eta d^2 \frac{1}{1-\eta(\eta d^2+2)}\\
    & \quad +\frac{1}{1-\eta}\Big)(2-\varepsilon)+ \left(1+ \frac{\eta^2 d^2}{1-\eta(\eta d^2+2)}\right)(1-\varepsilon)\nonumber\\
    & \hspace{-2cm} \le 2\frac{\eta}{\eta d^2+1} \left(\frac{1-\varepsilon+(2-\varepsilon)\eta d^2}{1-\eta (\eta d^2+2)} +\frac{1}{1-\eta} \right)+\left(1+\frac{\eta^2 d^2}{1-\eta(\eta d^2+2)}\right)(1-\varepsilon)\nonumber\\
    & \hspace{-2cm} \le 2\frac{\eta}{\eta d^2+1} \frac{2-\varepsilon +(2-\varepsilon)\eta d^2}{1-\eta (\eta d^2 +2)} + 1+\frac{\eta^2 d^2}{1-\eta(\eta d^2+2)}
\end{align}
for all $k$. This simplifies into
\begin{align}
    \Vert \mathrm{diag}(\mathbb{E}[B_k])\Vert &\le 1+\frac{\eta^2 d^2 +4\eta }{1-\eta(\eta d^2+2)}.
\end{align}
Thus we obtain
\begin{align}
    \mathbb{E}[V_T] & \leq (1+2\eta)^T\Big( -\varepsilon w_{0,1}^2 +\frac{1}{(1+2\eta)^T}+\frac{(1+2\eta(1-\varepsilon))^T}{(1+2\eta)^T\eta(T+1)} \nonumber \\
    & \hspace{.5cm} +\eta^2 d^2\left(1+\frac{\eta^2 d^2 +4\eta }{1-\eta(\eta d^2+2)}\right)\sum_{i=1}^T(1+2\eta)^{-i}\Big)
\end{align}
Bounding $\sum_{i=1}^T (1+2\eta)^{-i}$ by its infinite series $\sum_{i^1}^\infty (1+2\eta)^{-i}=(2\eta)^{-1}$ yields
\begin{align}
    \mathbb{E}[V_T] & \leq (1+2\eta)^T\Big( -\varepsilon w_{0,1}^2 +\frac{1}{(1+2\eta)^T}+\frac{(1+2\eta(1-\varepsilon))^T}{(1+2\eta)^T\eta(T+1)}\\
    & +\eta/2 d^2\left(1+\frac{\eta^2 d^2 +4\eta }{1-\eta(\eta d^2+2)}\right)\Big) .
\end{align}
We can show that for well chosen $\eta$ and $T$, the term under parenthesis is less that $-\varepsilon/4p$. Taking for example $\eta=\frac{\varepsilon}{4C p d^2}$ for some constant $C$ such that $\left(1+\frac{\eta^2 d^2 +4\eta }{1-\eta(\eta d^2+2)}\right)\le 2$ and $T>\max(4p^2 d^2C/\varepsilon, \log (4p\varepsilon^{-1})/\log(1+\varepsilon/(Cpd^2)))$ gives the result. For a small enough $\varepsilon$, we can take $C=1$.
\section{Implementation}

\label{Imp}
\subsection{Choosing the learning rate}
\label{Hedge}

In this section, we address the question of choosing the learning rate, i.e. the stepsize $\eta$ in iterations \eqref{stochstep}. Tuning the learning rate is essential in practice as it is well known to have a huge impact on the convergence speed of the method. Our idea to tune the learning rate is as follows: 

\begin{itemize}
    \item Choose the tolerance $\epsilon\in (0,1)$, and the algorithm's parameters $R, \ K\in \mathbb N_*$, $\rho\in (0,1)$ and $\beta >0$.
    
    \vspace{.3cm}
    
    \item \textit{Burn-in period:} 
    
    \vspace{.3cm}
    
    \begin{itemize}
        \item[-] For $\eta\in \{\rho^k\}_{k=1:K}$, run $R$ gradient iterations in parallel whose iterates are denoted by $w_t^{(k,r)}$, $t=1,\ldots,B$. 
    
    \vspace{.3cm}
    
        \item[-] Define $\pi^{(k)}_0=1/K$, $k=1,\ldots,K$. For $t=1,\ldots,B$, let 
    \begin{align}
        L_t^{(k)} & = \frac2{R(R-2)} \sum_{r<r'=2,\ldots,R} \langle w_t^{(k,r)},w_t^{(k,r')}\rangle,
    \end{align}
    and for $k=1,\ldots,K$, define
    \begin{align}
        \pi_{t+1}^{(k)} & = \pi_{t}^{(k)}\ \exp 
        \left(\beta \ L_t^{(k)} \right). 
    \end{align}
    
    \item[-] Stop when $\max_{k=1,\ldots,K} \ L_{t}^{(k)}\ge 1-10 \ \epsilon$. 
    \end{itemize}
    
    \vspace{.5cm}
    
    \item \textit{After burn-in}: 
    
    \vspace{.3cm}
    
    \begin{itemize}
        \item[-] Reset $R$ to 1 and $K$ to 1.    
        
        \vspace{.3cm}
        
        \item Normalise $\pi$.
        \vspace{.3cm}
        \item[-] At each step $t=B+1,\ldots$, choose the stepsize with probability $\pi_B$.
        
        \vspace{.3cm}
        
        \item[-] Stop when $L^{(1)}_t\ge 1-\epsilon$.
    \end{itemize}

\end{itemize}

Choosing the parameter $\beta$ is more robust than choosing the learning rate. Moreover, a reasonably effective value for $\beta$ is given by (see \cite{freund1997decision}):
\begin{align}
    \beta & = \sqrt{\frac{\log(K)}{B}}.
\end{align}
\label{FShap}
\subsection{Numerical experiment}
In this section, we present a simple numerical experiment which shows that  
\begin{itemize}
    \item The stochastic gradient method actually works in practice
    \item The adaptive selection of the learning rate/stepsize described in the previous subsection actually accelerates the method's convergence drastically.
\end{itemize}
We run a simple experiment on a random i.i.d. Gaussian matrix of size $10000\times 10000$. The convergence of $(L^{(1)}_t)_{t\in \mathbb N}$ to 1 of the plain stochastic gradient method is shown in Figure \ref{sub:renonc} below. The accelerated version's convergence for the same experiment is shown in Figure \ref{sub:popul} below.
These results show that the method of the previous Section actually provides a substantial acceleration. We carefully checked that the selected learning rate is not equal to the smallest nor the largest value on the proposed grid of values between $2^{-3}$, $2^{-2}$, \ldots $2^{17}$. The observed gain in convergence speed was by a factor of 8.75. 
Extensive numerical experiment demonstrating this behavior at larger scales will be included in an expanded version of this work.

\begin{figure}[ht!] 
\begin{center} 
\subfloat[$K=1$]{ \includegraphics[scale=0.4]{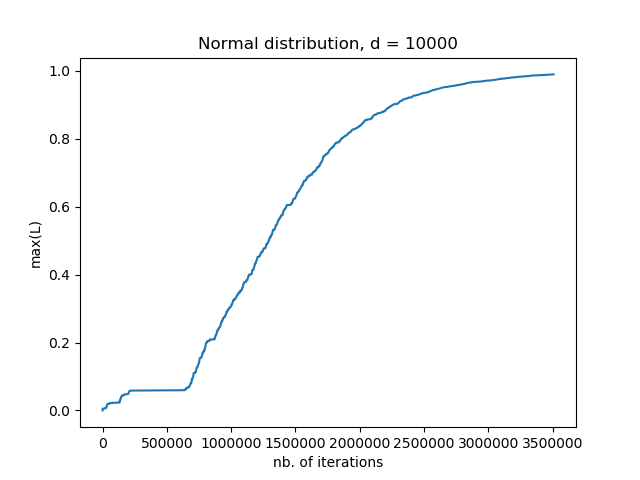} \label{sub:renonc} } 
\subfloat[$K=20$]{ \includegraphics[scale=0.4]{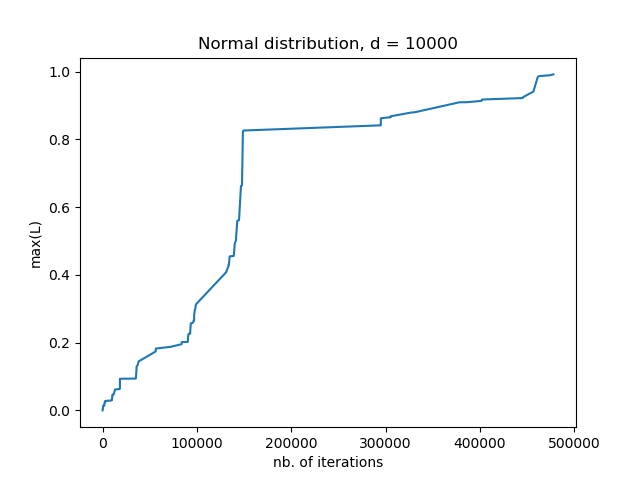} \label{sub:popul} } 
\caption{Convergence of $(L^{(1)}_t)_{t\in \mathbb N}$ as a function of the iteration index: (a) is for the case of the arbitraty choice of learning rate equal to $2^-4$ and (b) shows the behavior of the method using the learning procedure of Section \ref{FShap} for values of the learning rate equal to $2^{-3}$, $2^{-2}$, $2^{-1}$, $1$, $2$, \ldots, $2^{17}$.} 
\label{fig:my_label}
\end{center} 
\end{figure}

\appendix 

\section{Technical lemm\ae}
\label{app}
Recall that  
    \begin{align}
        B_T & = \prod_{t=T}^1 (I+\eta A_t)^* ((1-\varepsilon)I-A) \prod_{t=1}^T (I+\eta A_t).
    \end{align}
    
\begin{lemma}\label{esd}
	In the case of matrix completion, given a matrix $X$, we have
	\begin{align*}
	\mathbb{E}[A_t^\ast XA_t]=d^2 \ \mathrm{diag}(A\ \mathrm{diag}(X)A).
	\end{align*}
\end{lemma}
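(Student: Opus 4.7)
The plan is to carry out a direct calculation: fix the random pair $(i_t,j_t)=(i,j)$, compute the rank-one quadratic form $A_t^*XA_t$ explicitly, and then average uniformly over $(i,j)\in\{1,\ldots,d\}^2$ (noting the apparent typo "$n$" in the definition of $A_t$). First I would write $A_t = d^2 A_{ij}\,e_i e_j^*$ and therefore, using symmetry of $A$, $A_t^* = d^2 A_{ij}\,e_j e_i^*$. Then
\begin{align*}
A_t^* X A_t &= d^4 A_{ij}^2\,e_j (e_i^* X e_i)\,e_j^* = d^4 A_{ij}^2\,X_{ii}\,e_je_j^*,
\end{align*}
which is a matrix supported on the single diagonal entry $(j,j)$. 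This is the only slightly delicate step: the product collapses to a diagonal rank-one matrix because $e_i^*Xe_i=X_{ii}$, which is precisely what will make the answer diagonal.

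Next I would take expectations over the uniform draw of $(i,j)$, picking up a factor $1/d^2$:
\begin{align*}
\mathbb{E}[A_t^*XA_t] &= \frac{d^4}{d^2}\sum_{i,j=1}^d A_{ij}^2\,X_{ii}\,e_je_j^* = d^2 \sum_{j=1}^d\Big(\sum_{i=1}^d A_{ij}^2\,X_{ii}\Big)e_je_j^*.
\end{align*}
So the right-hand side is diagonal, with $(j,j)$-entry equal to $d^2\sum_{i} A_{ij}^2 X_{ii}$.

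Finally I would identify this diagonal matrix as $d^2\,\mathrm{diag}(A\,\mathrm{diag}(X)\,A)$. Writing $D=\mathrm{diag}(X)$, which has entries $D_{ii}=X_{ii}$, one has $(ADA)_{jj}=\sum_{i}A_{ji}D_{ii}A_{ij}=\sum_i A_{ij}^2 X_{ii}$ by symmetry of $A$. Hence $\mathrm{diag}(ADA)$ has the same diagonal entries, and multiplying by $d^2$ gives exactly the expression computed above.

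The only possible obstacles are clerical: keeping track of transposes, exploiting the symmetry $A_{ji}=A_{ij}$ at the right moment, and observing that "$n$" in the definition of the sampling distribution should be read as "$d$" for dimensional consistency. There is no genuine difficulty — the lemma is a one-line computation once the rank-one product has been simplified.
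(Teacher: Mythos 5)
Your proof is correct and follows essentially the same route as the paper's: simplify the rank-one product $A_t^*XA_t$ to $d^4 A_{i_tj_t}A_{j_ti_t}X_{i_ti_t}e_{j_t}e_{j_t}^*$, average over the uniform draw of $(i_t,j_t)$ to get the factor $d^2$, and identify the resulting diagonal matrix with $d^2\,\mathrm{diag}(A\,\mathrm{diag}(X)A)$ via symmetry of $A$. The only cosmetic difference is that you invoke symmetry at the start (writing $A_{ij}^2$) while the paper keeps $A_{ij}A_{ji}$ and uses symmetry at the end; your reading of $n$ as $d$ in the sampling distribution matches what the paper implicitly assumes.
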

\begin{proof}
	The resulting matrix writes
	\begin{align*}
	A_t^\ast XA_t&=d^4 A_{ij}A_{ji}e_{j_t}e_{i_t}^\ast X e_{i_t} e_{j_t}^\ast\\
	&=d^4 A_{ij}A_{ji} X_{ii} e_{j_t} e_{j_t}^\ast.
	\end{align*}
	Therefore the expected matrix writes
	\begin{align*}
	\mathbb{E}[A_t^\ast X A_t]=d^2 \sum_{i,j}^d A_{ij}A_{ji} X_{ii} e_j e_j^\ast
	\end{align*}
	Using the symmetry of $A$ gives the result.
\end{proof}

Now our next goal is to see how $\mathrm{diag}\left(A^*\mathrm{diag}(\mathbb E[B_{T-1}])A\right)$ evolves with the iterations. For this purpose, take the diagonal of \eqref{tototo}, multiply from the left by $A^*$ and from the right by $A$ and take the diagonal of the resulting expression. 

Recall that 
\begin{align}
    \Vert A\Vert_{1\rightarrow 2} & = \max_{j=1}^d \ \Vert A_{:,j} \Vert_2. \nonumber
\end{align}

\begin{lemma} We have that 
    \begin{align}
        \Vert \mathrm{diag} \left(\mathbb E [B_{T}] \right)\Vert & \le 2\eta\ \Vert \mathbb{E}[B_{T-1}]\Vert_{1\rightarrow 2 } + (1+\eta^2d^2)\ \Vert \mathrm{diag}(\mathbb E [B_{T-1}])\Vert
        \label{dyn1}
    \end{align}
\end{lemma}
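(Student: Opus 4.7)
The plan is to start directly from the recurrence established in Lemma~\ref{recurrence}, take the diagonal of both sides, and then estimate the norm term by term. Since $B_{T-1}$ is Hermitian (because $(1-\varepsilon)I-A$ is symmetric and the product $\prod_{t=T}^1(I+\eta A_t)^*$ is the adjoint of $\prod_{t=1}^T(I+\eta A_t)$), the same is true of $\mathbb E[B_{T-1}]$, which will let me identify $[A^*M]_{ii}$ with $[MA]_{ii}$. I will also use throughout that $\Vert A\Vert_{\mathrm{op}}\le 1$, which gives the column-bound $\Vert A_{:,i}\Vert_2=\Vert Ae_i\Vert_2\le 1$ for every $i$.

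First I take the diagonal of \eqref{tototo}; using $\mathrm{diag}(X+Y)=\mathrm{diag}(X)+\mathrm{diag}(Y)$, and the fact that the spectral norm of a diagonal matrix is its largest absolute entry, the triangle inequality produces
\begin{align*}
\Vert\mathrm{diag}(\mathbb E[B_T])\Vert &\le \Vert\mathrm{diag}(\mathbb E[B_{T-1}])\Vert + \eta\,\Vert\mathrm{diag}(A^*\mathbb E[B_{T-1}]+\mathbb E[B_{T-1}]A)\Vert \\
&\quad + \eta^2 d^2\,\Vert\mathrm{diag}(A^*\mathrm{diag}(\mathbb E[B_{T-1}])A)\Vert.
\end{align*}
It then remains to bound each of the last two terms appropriately.

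For the middle term, writing $M=\mathbb E[B_{T-1}]$ and using Hermiticity of $M$ together with $A=A^*$, I get $[A^*M+MA]_{ii}=2[MA]_{ii}=2\langle M_{:,i},A_{:,i}\rangle$; Cauchy--Schwarz and $\Vert A_{:,i}\Vert_2\le 1$ then yield $|[A^*M+MA]_{ii}|\le 2\Vert M_{:,i}\Vert_2\le 2\Vert M\Vert_{1\to 2}$, hence $\Vert\mathrm{diag}(A^*M+MA)\Vert\le 2\Vert M\Vert_{1\to 2}$. For the last term, a direct computation gives $[A^*\mathrm{diag}(M)A]_{ii}=\sum_k A_{ki}^2 M_{kk}$, so the absolute value is at most $\Vert A_{:,i}\Vert_2^2\cdot\Vert\mathrm{diag}(M)\Vert\le\Vert\mathrm{diag}(M)\Vert$; taking the max over $i$ gives $\Vert\mathrm{diag}(A^*\mathrm{diag}(M)A)\Vert\le\Vert\mathrm{diag}(M)\Vert$. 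Plugging both estimates back into the triangle-inequality display and collecting terms yields the claimed bound \eqref{dyn1}.

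The only subtle point I anticipate is the handling of the middle term: one has to notice that the ``off-diagonal'' matrix $A^*M+MA$ actually couples rows and columns of $M$ and that Cauchy--Schwarz against the columns of $A$ (whose norms are controlled by $\Vert A\Vert_{\mathrm{op}}\le 1$) is precisely what is needed to replace the operator norm of $M$ by the one-to-two norm $\Vert M\Vert_{1\to 2}$ appearing in the statement. Everything else is straightforward bookkeeping, so once this identification is made the proof is essentially a one-line estimate per term.
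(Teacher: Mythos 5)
Your proof is correct and follows essentially the same route as the paper's: take the diagonal of the recurrence from Lemma~\ref{recurrence}, use symmetry of $\mathbb E[B_{T-1}]$ and $A$ to merge the two cross terms, bound $\Vert\mathrm{diag}(A^*M)\Vert$ by $\Vert A\Vert_{1\to2}\Vert M\Vert_{1\to2}$ via Cauchy--Schwarz on columns, and bound $\Vert\mathrm{diag}(A^*\Delta A)\Vert$ by $\Vert A\Vert_{1\to2}^2\Vert\Delta\Vert$. Your entrywise derivations just make explicit the inequalities the paper invokes directly.
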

\begin{proof}
Expanding the recurrence relationship \eqref{tototo} gives
\begin{align}
    \mathrm{diag}(\mathbb{E}[B_T])&=\mathrm{diag}(\mathbb{E}[B_{T-1}])+\eta \left(\mathrm{diag}\left(A^* \mathbb E[B_{T-1}]+\mathbb E[B_{T-1}] A \right)\right) \nonumber\\
    &\hspace{1cm}+\eta^2 d^2 \mathrm{diag}\left(A^*\mathrm{diag}(\mathbb E[B_{T-1}])A\right).\nonumber 
\end{align}
For any diagonal matrix $\Delta$ and symmetric matrix $A$, we have
\begin{align}
    \Vert \mathrm{diag}( A^\ast \Delta A) \Vert \le \Vert A\Vert_{1\rightarrow 2 }^2 \Vert \Delta\Vert.
\end{align}
Therefore, by taking the operator norm on both sides of the equality, we have
\begin{align}
    \Vert\mathrm{diag}(\mathbb{E}[B_T])\Vert&\le (1+\eta^2 d^2 \Vert A\Vert_{1\rightarrow 2}^2)\Vert\mathrm{diag}(\mathbb{E}[B_{T-1}])\Vert+ 2\eta \Vert\mathrm{diag}(A^* \mathbb E[B_{T-1}])\Vert
\end{align}
We conclude using $\Vert \mathrm{diag}(A^* \mathbb \mathbb E[B_{T-1}])\Vert \le \Vert A\Vert_{1\rightarrow 2}\Vert \mathbb E[B_{T-1}]\Vert_{1\rightarrow 2}$ and $\Vert A\Vert_{1\rightarrow 2 }\le 1$.
\end{proof}
We also have to understand how the $\ell_{1\rightarrow 2}$ norm evolves. 
\begin{lemma}
    We have 
    \begin{align}
        \Vert \mathbb E [B_{T}]\Vert_{1\rightarrow 2} & \le \eta \ \Vert\mathbb E[B_{T-1}]\Vert + (1+\eta) \ \Vert \mathbb{E}[B_{T-1}]\Vert_{1\rightarrow 2 }+\eta^2 d^2\ \Vert \mathrm{diag}(\mathbb E [B_{T-1}])\Vert.
        \label{dyn2}
    \end{align}
\end{lemma}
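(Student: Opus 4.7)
The plan is to follow the template of the preceding lemma, starting from the recurrence
$$\mathbb E[B_T] = \mathbb E[B_{T-1}] + \eta\bigl(A^*\mathbb E[B_{T-1}] + \mathbb E[B_{T-1}] A\bigr) + \eta^2 d^2\ \mathrm{diag}\bigl(A^*\mathrm{diag}(\mathbb E[B_{T-1}])A\bigr),$$
take $\Vert\cdot\Vert_{1\to 2}$ on both sides, and apply the triangle inequality. The three nontrivial contributions then have to be bounded individually, and the key observation is that the left- and right-multiplication by $A$ behave differently with respect to $\Vert\cdot\Vert_{1\to 2}$, which is precisely what produces the asymmetric structure of the stated inequality.

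For the first mixed term, I would write $(A^* M)_{:,j}=A^* M_{:,j}$, so
$\Vert A^* M\Vert_{1\to 2}=\max_j\Vert A^* M_{:,j}\Vert_2\le \Vert A\Vert\,\Vert M\Vert_{1\to 2}\le \Vert M\Vert_{1\to 2}$
using that $\Vert A\Vert\le 1$. For the second mixed term, $(MA)_{:,j}=M A_{:,j}$ gives instead
$\Vert MA\Vert_{1\to 2}=\max_j\Vert M A_{:,j}\Vert_2\le \Vert M\Vert\,\Vert A\Vert_{1\to 2}\le \Vert M\Vert$,
again using $\Vert A\Vert_{1\to 2}\le 1$. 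Applied to $M=\mathbb E[B_{T-1}]$, these respectively contribute $\eta\Vert\mathbb E[B_{T-1}]\Vert_{1\to 2}$ and $\eta\Vert\mathbb E[B_{T-1}]\Vert$, which together with the initial $\Vert\mathbb E[B_{T-1}]\Vert_{1\to 2}$ from the identity term give the $(1+\eta)\Vert\mathbb E[B_{T-1}]\Vert_{1\to 2}+\eta\Vert\mathbb E[B_{T-1}]\Vert$ piece of the bound.

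For the third term, I would use the fact that for a diagonal matrix $D$ one has $\Vert D\Vert_{1\to 2}=\Vert D\Vert$, so it suffices to bound the operator norm of $\mathrm{diag}(A^*\mathrm{diag}(\mathbb E[B_{T-1}])A)$. Writing $\Delta=\mathrm{diag}(\mathbb E[B_{T-1}])$, the $i$-th diagonal entry of $A^*\Delta A$ is $\sum_k A_{ki}^2\Delta_{kk}$, whose absolute value is bounded by $\Vert\Delta\Vert\,\Vert A_{:,i}\Vert_2^2\le \Vert A\Vert_{1\to 2}^2\Vert\Delta\Vert\le \Vert\Delta\Vert$. This yields the remaining $\eta^2 d^2\,\Vert\mathrm{diag}(\mathbb E[B_{T-1}])\Vert$ term and closes the argument.

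I do not expect a genuine obstacle here: the proof is essentially a careful bookkeeping exercise. The only subtle point is to resist the temptation to symmetrize the two mixed terms — keeping the asymmetry (one bounded by the $1\to 2$ norm, the other by the operator norm) is exactly what makes this recurrence compatible with the previous lemma's recurrence on $\Vert\mathrm{diag}(\mathbb E[B_T])\Vert$, so that together they form a closed linear dynamical system on the triple $(\Vert\mathbb E[B_T]\Vert, \Vert\mathbb E[B_T]\Vert_{1\to 2}, \Vert\mathrm{diag}(\mathbb E[B_T])\Vert)$ that can be iterated to obtain the bound on $\Vert\mathrm{diag}(\mathbb E[B_k])\Vert$ used in Section~\ref{proof}.
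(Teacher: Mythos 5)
Your proposal is correct and follows essentially the same route as the paper's proof: take the $1\to 2$ norm of the recurrence from Lemma~\ref{recurrence}, bound $\Vert A^*M\Vert_{1\to 2}\le\Vert A\Vert\,\Vert M\Vert_{1\to 2}$ and $\Vert MA\Vert_{1\to 2}\le\Vert M\Vert\,\Vert A\Vert_{1\to 2}$ for the two mixed terms, and use $\Vert\Delta\Vert_{1\to 2}=\Vert\Delta\Vert$ together with $\Vert\mathrm{diag}(A^*\Delta A)\Vert\le\Vert A\Vert_{1\to 2}^2\Vert\Delta\Vert$ for the diagonal term. Your column-wise justifications of the mixed-term bounds and the explicit computation of the diagonal entries simply spell out steps the paper leaves implicit.
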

\begin{proof} 
Expanding the recurrence relationship gives
\begin{align*}
    \Vert \mathbb E [B_T]\Vert_{1\rightarrow 2}&=\Vert \mathbb E [B_{T-1}]\Vert_{1\rightarrow2}+ \eta\ \left(\Vert A^\ast\mathbb E [B_{T-1}]\Vert_{1\rightarrow 2} +\Vert \mathbb E [B_{T-1}]^\ast A\Vert_{1\rightarrow 2}\right)\\
    &+\eta^2 d^2 \Vert \mathrm{diag}(A^\ast \mathrm{diag}(\mathbb E[B_{T-1}])A)\Vert_{1 \rightarrow 2}.
\end{align*}
For a diagonal matrix $\Delta$, we have $\Vert \Delta\Vert_{1\rightarrow 2}=\Vert \Delta\Vert$. This leads to 
\begin{align*}
    \Vert \mathbb E [B_T]\Vert_{1\rightarrow 2}&=\Vert \mathbb E [B_{T-1}]\Vert_{1\rightarrow2}+ \eta\ \left(\Vert A\Vert \Vert E [B_{T-1}]\Vert_{1\rightarrow 2} +\Vert \mathbb E [B_{T-1}]\Vert \Vert A\Vert_{1\rightarrow 2}\right)\\
    &+\eta^2d^2\Vert A\Vert_{1\rightarrow 2}^2 \Vert \mathrm{diag}(\mathbb E [B_{T-1}])\Vert.
\end{align*}
Finally, using $\Vert A\Vert_{1\rightarrow 2}\le 1$ concludes the proof.
\end{proof}
We then have to understand how the operator norm of $\mathbb E [B_T]$ evolves
\begin{lemma}
We have
\begin{align}
    \Vert \mathbb E [B_T]\Vert \le (1+2\eta) \Vert \mathbb E [B_{T-1}]\Vert + \eta^2 d^2 \ \Vert \mathrm{diag}(\mathbb E [B_{T-1}])\Vert.\label{dyn3}
\end{align}
\end{lemma}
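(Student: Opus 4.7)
The plan is to mirror the derivation of the two preceding lemmas: start from the expectation recurrence of Lemma~\ref{recurrence}, apply the triangle inequality for the operator norm term by term, and then control each resulting summand with the elementary estimates already introduced for the $\mathrm{diag}$ and $\Vert\cdot\Vert_{1\to 2}$ norms. Concretely, Lemma~\ref{recurrence} gives
\begin{align*}
\mathbb{E}[B_T] = \mathbb{E}[B_{T-1}] + \eta\bigl(A^\ast \mathbb{E}[B_{T-1}] + \mathbb{E}[B_{T-1}]\,A\bigr) + \eta^2 d^2\,\mathrm{diag}\bigl(A^\ast\,\mathrm{diag}(\mathbb{E}[B_{T-1}])\,A\bigr),
\end{align*}
so taking $\Vert\cdot\Vert$ on both sides and using the triangle inequality splits the bound into three pieces.

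The first-order piece I would handle by submultiplicativity: $\Vert A^\ast \mathbb{E}[B_{T-1}]\Vert \le \Vert A\Vert\,\Vert\mathbb{E}[B_{T-1}]\Vert$ and symmetrically for $\Vert\mathbb{E}[B_{T-1}]\,A\Vert$. Since the leading eigenvalue satisfies $s_1=1$ (as used in the proof of Theorem~\ref{mainth}), $\Vert A\Vert=1$, which collapses both contributions into $2\eta\,\Vert \mathbb{E}[B_{T-1}]\Vert$. Added to the unchanged term $\Vert \mathbb{E}[B_{T-1}]\Vert$, this produces the $(1+2\eta)\,\Vert \mathbb{E}[B_{T-1}]\Vert$ coefficient announced on the right-hand side of \eqref{dyn3}.

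For the second-order piece I would reuse the diagonal-compression inequality already exploited in the proof of \eqref{dyn1}: for any symmetric $A$ and any diagonal matrix $\Delta$,
\begin{align*}
\Vert \mathrm{diag}(A^\ast \Delta A)\Vert \le \Vert A\Vert_{1\to 2}^{2}\,\Vert \Delta\Vert.
\end{align*}
Applied with $\Delta=\mathrm{diag}(\mathbb{E}[B_{T-1}])$ and combined with $\Vert A\Vert_{1\to 2}\le 1$ (the same normalisation used in the proofs of \eqref{dyn1} and \eqref{dyn2}), this contributes at most $\eta^2 d^2\,\Vert\mathrm{diag}(\mathbb{E}[B_{T-1}])\Vert$. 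Summing the three contributions delivers the claimed inequality \eqref{dyn3}.

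No genuine obstacle is anticipated: the argument is structurally identical to the proofs of \eqref{dyn1} and \eqref{dyn2}, only with the operator norm now sitting on the left-hand side rather than the $\mathrm{diag}$ or $\Vert\cdot\Vert_{1\to 2}$ norm, so that submultiplicativity (rather than the column-norm bound) becomes the right tool for the linear term. The only point meriting a brief sanity check is the joint use of the two normalisations $\Vert A\Vert\le 1$ and $\Vert A\Vert_{1\to 2}\le 1$, both of which are already implicit in the paper (the first from $s_1=1$, the second invoked just above), so the chain of inequalities closes cleanly.
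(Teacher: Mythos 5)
Your proposal is correct and follows essentially the same route as the paper: expand the recurrence from Lemma~\ref{recurrence}, apply the triangle inequality, bound the linear terms by submultiplicativity with $\Vert A\Vert=1$, and bound the quadratic term via $\Vert \mathrm{diag}(A^\ast \Delta A)\Vert \le \Vert A\Vert_{1\to 2}^2\Vert\Delta\Vert \le \Vert\Delta\Vert$. The paper's own proof merely says ``using similar inequalities as in the proof of the lemmas above''; you have simply made those steps explicit.
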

\begin{proof}
Expanding the recurrence relationship \eqref{tototo} return
\begin{align*}
    \Vert\mathbb{E}[B_T]\Vert =\mathbb E [B_{T-1}]+\eta(\Vert A^\ast \mathbb E [B_{T-1}]\Vert +\Vert \mathbb E [B_{T-1}] A\Vert)+\eta^2 d^2 \Vert \mathrm{diag}(A^\ast\mathrm{diag}(\mathbb E [B_{T-1}])A)\Vert. 
\end{align*}
Then using similar inequalities as in the proof of the lemmas above, we have the result.
\end{proof}
\begin{lemma}\label{norm:diag} Let $\Vert A\Vert =1$, then we have
    \begin{align}
        \Vert\mathrm{diag}(A^\ast \mathrm{diag}(\mathbb{E}[B_T])A)\Vert&\le \alpha\max_j (1-\varepsilon-s_j)+\beta\Vert (1-\varepsilon)I-A\Vert_{1\rightarrow2}+\gamma\max_j (1-\varepsilon-A_{jj})
    \end{align}
    where
    \begin{align*}
        \alpha&=2\frac{\eta}{\eta d^2+1}\left(\frac{1-\eta^{T-2}(\eta d^2+2)^{T-2}}{1-\eta(\eta d^2+2)}-\frac{1-\eta^{T-2}}{1-\eta} \right)\\
        \beta&=2\frac{\eta}{\eta d^2 +1}\left(\eta d^2 \frac{1-\eta^{T-2}(\eta d^2+2)^{T-2}}{1-\eta(\eta d^2+2)}+\frac{1-\eta^{T-2}}{1-\eta} \right)\\
        \gamma&=1+\eta^2d^2 \frac{1-\eta^{T-2}(\eta d^2+2)^{T-2}}{1-\eta(\eta d^2 +2)}
    \end{align*}
\end{lemma}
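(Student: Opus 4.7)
The plan is to reduce the statement to a bound on $u_T := \|\mathrm{diag}(\mathbb{E}[B_T])\|$ and then iterate the three coupled recurrences established in the preceding lemmas. The reduction is immediate: the operator inequality $\|\mathrm{diag}(A^*\mathrm{diag}(X)A)\|\le\|A\|_{1\to 2}^2\,\|\mathrm{diag}(X)\|$, already used in the proof of the recurrence for $u_T$, combined with the standing bound $\|A\|_{1\to 2}\le 1$, yields $\|\mathrm{diag}(A^*\mathrm{diag}(\mathbb{E}[B_T])A)\|\le u_T$, so it is enough to bound $u_T$.

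Next I would package the three preceding lemmas as a single linear recursion. Setting $v_T=\|\mathbb{E}[B_T]\|_{1\to 2}$ and $w_T=\|\mathbb{E}[B_T]\|$, the inequalities read $(u_T,v_T,w_T)^\top \le M\,(u_{T-1},v_{T-1},w_{T-1})^\top$ componentwise, where $M$ is the nonnegative $3\times 3$ matrix with rows $(1+\eta^2d^2,\,2\eta,\,0)$, $(\eta^2d^2,\,1+\eta,\,\eta)$ and $(\eta^2d^2,\,0,\,1+2\eta)$; the initial data $(u_0,v_0,w_0)$ are exactly $(\|\mathrm{diag}((1-\varepsilon)I-A)\|,\|(1-\varepsilon)I-A\|_{1\to 2},\|(1-\varepsilon)I-A\|)$, which match the three norms appearing in the statement. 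A direct computation factors the characteristic polynomial as
\begin{align*}
\det(M-\lambda I)=(1+\eta-\lambda)\bigl[\lambda^2-(2+2\eta+\eta^2d^2)\lambda+(1+2\eta+\eta^2d^2)\bigr],
\end{align*}
whose quadratic factor has discriminant $\eta^2(\eta d^2+2)^2$, giving eigenvalues $\lambda_1=1$, $\lambda_2=1+\eta$ and $\lambda_3=1+\eta(\eta d^2+2)$. The two rates $\eta$ and $\eta(\eta d^2+2)$ that appear inside the geometric sums defining $\alpha,\beta,\gamma$ are then precisely $\lambda_2-1$ and $\lambda_3-1$.

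To extract $u_T$ I would exploit the asymmetric coupling of $M$: the $w$-equation does not involve $v$ and the $u$-equation does not involve $w$. One can therefore first solve $w_T$ in closed form in terms of the history $(u_k)$, then $v_T$ in terms of the histories of $u$ and $w$, and finally substitute into the recurrence for $u_T$; equivalently, one diagonalises $M$ and reads the first row of $M^T$ to obtain $u_T\le \gamma u_0+\beta v_0+\alpha w_0$. Here $\alpha$ captures the three-step path $w\to v\to u$ (and so produces the difference of two geometric sums with ratios $\eta(\eta d^2+2)$ and $\eta$); $\beta$ captures the direct contribution $v\to u$ together with the telescoped return through $u$; and $\gamma$ captures the self-loop on $u$ together with the return path $u\to v\to u$. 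The prefactor $1/(\eta d^2+1)$ multiplying $\alpha$ and $\beta$ comes from the normalisation of the left eigenvector associated with $\lambda_3$, whose components are proportional to $(\eta d^2(\eta d^2+1),\,2\eta d^2,\,2)$.

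The main obstacle will be the algebraic bookkeeping needed to match the double sum produced by iterated substitution with the exact combination of geometric sums $\sum_k(\eta(\eta d^2+2))^k$ and $\sum_k \eta^k$ appearing in the statement. I would handle this either by induction on $T$ with the three formulas for $u_T,v_T,w_T$ taken as a joint hypothesis (the induction step then reduces to a handful of telescoping identities), or by writing $M^T=\sum_i\lambda_i^T P_i$ with explicit spectral projectors and identifying the $(1,1),(1,2),(1,3)$ entries of $M^T$. Everything else is mechanical: once the entries of $M^T$ are in hand, the proof concludes by bounding the three initial quantities by $\max_j(1-\varepsilon-A_{jj})$, $\|(1-\varepsilon)I-A\|_{1\to 2}$ and $\max_j(1-\varepsilon-s_j)$ respectively.
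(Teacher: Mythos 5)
Your setup is the same as the paper's: the proof there also assembles \eqref{dyn1}--\eqref{dyn3} into the componentwise $3\times 3$ recursion \eqref{mat-ineq} (with the components ordered $(\Vert\cdot\Vert,\Vert\cdot\Vert_{1\to2},\Vert\mathrm{diag}(\cdot)\Vert)$, i.e.\ your $(w,v,u)$ reversed), iterates it, and bounds the initial data $B_0=(1-\varepsilon)I-A$ by the three quantities in the statement; your reduction of the left-hand side to $\Vert\mathrm{diag}(\mathbb E[B_T])\Vert$ via $\Vert A\Vert_{1\to2}\le 1$ is also what the paper does implicitly. Your spectral computation is correct: writing $M=I+\eta N$, the eigenvalues of $N$ are $0$, $1$ and $\eta d^2+2$, the relevant left eigenvector is proportional to $(\eta d^2(\eta d^2+1),2\eta d^2,2)$, and the eigenvalues of $M$ are $1$, $1+\eta$ and $1+\eta(\eta d^2+2)$.

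The gap is in the final identification. If you genuinely diagonalise $M$ and read off the row of $M^T$ corresponding to $u$, its entries are linear combinations of $1$, $(1+\eta)^T$ and $\bigl(1+\eta(\eta d^2+2)\bigr)^T$, and the coefficient of the largest eigenvalue in the $(u,u)$ entry is $\eta d^2/(\eta d^2+2)\neq 0$, so that row grows geometrically in $T$. The stated $\alpha,\beta,\gamma$ are instead \emph{partial geometric sums} in the ratios $\eta$ and $\eta(\eta d^2+2)$, e.g.\ $\frac{1-\eta^{T-2}(\eta d^2+2)^{T-2}}{1-\eta(\eta d^2+2)}=\sum_{i=0}^{T-3}\bigl(\eta(\eta d^2+2)\bigr)^i$, which remain bounded as $T\to\infty$ when $\eta(\eta d^2+2)<1$. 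These two families of expressions do not coincide, so the step ``read the first row of $M^T$ to obtain $u_T\le\gamma u_0+\beta v_0+\alpha w_0$'' cannot land on the stated constants. The source of the mismatch is that the paper's own derivation expands $(I+\eta N)^T$ as $I+\sum_{i=1}^T\eta^i N^i$, dropping the binomial coefficients $\binom{T}{i}$; since $N$ is entrywise nonnegative and $\binom{T}{i}\ge 1$, the dropped-coefficient expression is a \emph{lower} bound on $(I+\eta N)^T$, so it cannot legitimately be substituted into an upper-bound chain, and it is precisely this substitution that produces the geometric sums. Carried out correctly, your plan proves a valid but different (and for large $T$ weaker) bound with $(1+\eta)^T$ and $(1+\eta(\eta d^2+2))^T$ in place of the geometric sums; to reproduce the lemma exactly as stated you would have to reproduce the faulty expansion. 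You should either prove the $T$-dependent bound and check whether it still suffices in the main argument, or flag that the stated $\alpha,\beta,\gamma$ do not follow from \eqref{mat-ineq}.
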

\begin{proof}
Expanding the recurrence and using equations \eqref{dyn1}, \eqref{dyn2}, and \eqref{dyn3} yields the following system
\begin{align}\label{mat-ineq}
    \begin{bmatrix}
    \Vert \mathbb E[B_T]\Vert\\
    \Vert \mathbb E[B_T]\Vert_{1\rightarrow 2}\\
    \Vert \mathrm{diag}(\mathbb E[B_T])\Vert
    \end{bmatrix}\le \left(I+\eta
    \begin{bmatrix} 
    2 & 0 &\eta d^2\\
    1 &1 &\eta d^2\\
    0 & 2 & \eta d^2 
    \end{bmatrix}\right)
    \begin{bmatrix}
    \Vert \mathbb E[B_{T-1}]\Vert\\
    \Vert \mathbb E[B_{T-1}]\Vert_{1\rightarrow 2}\\
    \Vert \mathrm{diag}(\mathbb E[B_{T-1}])\Vert
    \end{bmatrix}
\end{align}
    To obtain the result, we expand the inequality by recurrence. Therefore, we are interested in computing the $T$-th power of the matrix in inequality \eqref{mat-ineq}. We have
    \begin{align}
        \left(I+\eta
    \begin{bmatrix} 
    2 & 0 &\eta d^2\\
    1 &1 &\eta d^2\\
    0 & 2 & \eta d^2 
    \end{bmatrix}\right)^T=I+\sum_{i=1}^{T} \eta^i \begin{bmatrix} 
    2 & 0 &\eta d^2\\
    1 &1 &\eta d^2\\
    0 & 2 & \eta d^2 
    \end{bmatrix}^i.
    \end{align}
    After computing the power matrices, it result that
    \begin{align}
        \Vert \mathrm{diag}(\mathbb E[B_T])\Vert & \le \sum_{i=1}^{T} \left(\eta^i \frac{2(\eta d^2 +2)^{i-1}-1}{\eta d^2 +1}\right)\Vert \mathbb{E} [B_0]\Vert \nonumber \\
        & +\sum_{i=1}^{T} \left(\eta^i \frac{2\eta d^2(\eta d^2 +2)^{i-1}+1}{\eta d^2 +1}\right)\Vert \mathbb{E}[B_0]\Vert_{1\rightarrow 2 } \nonumber\\
        & + \left(1+\eta^2 d^2\sum_{i=1}^{T} (\eta^2 d^2+2\eta)^{i-1}\right)\Vert \mathrm{diag}(\mathbb{E}[B_0])\Vert.
    \end{align}
    We conclude after computing the sums and bounding from above $\Vert \mathbb{E}[B_0]\Vert$ by $\max_{j}(1-\varepsilon -s_j)$.
\end{proof}

\begin{lemma}\label{lk}
For $\eta <1$ and $\varepsilon>0$, we have
\begin{align}
    \max_{s\in [0,1]} (1+2\eta \ s)^T(1-\varepsilon-s) \le 1+\frac{(1+2\eta(1-\varepsilon))^T}{\eta(T+1)}
\end{align}
\end{lemma}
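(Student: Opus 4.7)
The plan is to view the left-hand side as a one-variable optimization problem. First I would restrict to $s \in [0, 1-\varepsilon]$, since on $(1-\varepsilon, 1]$ the factor $(1-\varepsilon - s)$ is nonpositive and the inequality is then trivial (the right-hand side is strictly positive). On the restricted interval I would perform the change of variables $u = 1-\varepsilon - s$ and abbreviate $a = 1 + 2\eta(1-\varepsilon)$, which turns the objective into the cleaner expression $g(u) = u\,(a - 2\eta u)^T$ on $u \in [0, 1-\varepsilon]$.

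Next I would compute $g'(u) = (a-2\eta u)^{T-1}\,[a - 2\eta(T+1)u]$, so the unique interior critical point is $u^\ast = a/(2\eta(T+1))$ and a quick sign check shows $g$ is unimodal with maximum at $u^\ast$. Substituting back gives
\[
g(u^\ast) \;=\; \frac{a^{T+1}}{2\eta(T+1)}\left(\frac{T}{T+1}\right)^T.
\]
If $u^\ast$ happens to lie outside $[0, 1-\varepsilon]$, then the constrained maximum is attained at a boundary point, where $g$ equals $0$ or $1-\varepsilon$; in either case the value is at most $1$ and the lemma is immediate.

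The key quantitative step, and the one requiring the most care, is bounding the factor $\tfrac{a}{2}(T/(T+1))^T$. Using the elementary inequality $(1+1/T)^T \ge 2$ valid for all integers $T \ge 1$ yields $(T/(T+1))^T \le 1/2$, while the hypothesis $\eta < 1$ together with $\varepsilon \ge 0$ gives $a = 1 + 2\eta(1-\varepsilon) < 3$. Combining these gives $\tfrac{a}{2}(T/(T+1))^T \le 3/4 < 1$, hence
\[
g(u^\ast) \;=\; \frac{a^T}{\eta(T+1)}\cdot \frac{a}{2}\left(\frac{T}{T+1}\right)^T \;\le\; \frac{a^T}{\eta(T+1)} \;\le\; 1 + \frac{(1+2\eta(1-\varepsilon))^T}{\eta(T+1)},
\]
which is precisely the claimed bound. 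The main obstacle is minor: it amounts to the bookkeeping of the boundary cases and remembering to invoke $(1+1/T)^T \ge 2$ rather than the cruder $(T/(T+1))^T \le 1$, which would not close the gap. Once the substitution $u = 1-\varepsilon - s$ is in place, the optimization is completely transparent and the constants fall out by direct inspection.
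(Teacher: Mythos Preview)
Your proof is correct and follows essentially the same route as the paper: both locate the interior critical point of the one-variable objective and bound its value there, with your substitution $u=1-\varepsilon-s$ merely streamlining the algebra. One incidental advantage of your closing estimate---invoking $(1+1/T)^T\ge 2$ together with $a<3$---is that it is valid for the full range $\eta<1$, whereas the paper's final step $(1+1/(2\eta))/(T+1)\le 1/(\eta(T+1))$ tacitly requires $\eta\le 1/2$.
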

\begin{proof}
Denote $f(s)=(1+2\eta \ s)^T(1-\varepsilon-s)$. Differentiating $f$ and setting to zero, we obtain
\begin{align*}
    &2\eta T(1+2\eta \ s)^{T-1}(1-\varepsilon-s)-(1+2\eta \ s)^T=0\\
    &\Longleftrightarrow2\eta T(1-\varepsilon-s)-(1+2\eta \ s) =0\\
    &\Longleftrightarrow\frac{T(1-\varepsilon)-1/2\eta}{T+1}=s
\end{align*}
Let $s_c=\frac{T-\varepsilon-1/2\eta}{T+1}$ denote this critical point. 
Consider the two following cases :
\begin{enumerate}
\item[-]if $s_c \notin [0,1]$, then $f$ has no critical point in the domain and therefore is maximised at either domain endpoint, i.e. $$\max_{s\in [0,1]} f(s)=\max\{ f(0)=1-\varepsilon,f(1)=-\varepsilon(1+2\eta)^T\}\le 1$$
\item[-] if $s_c \in [0,1]$, then $f$ is maximised at $s_c$ and the value of $f$ at $s_c$ is 
\begin{align*}
    &\Bigg( 1+2\eta \frac{T(1-\varepsilon)-1/2\eta}{T+1}\Bigg)^T \Bigg(1-\varepsilon-\frac{T(1-\varepsilon)-1/2\eta}{T+1}\Bigg)\\
    &=\Bigg(1+\frac{2\eta T(1-\varepsilon)-1}{T+1}\Bigg)^T \Bigg(\frac{1-\varepsilon + 1/2\eta}{T+1}\Bigg)\\
    &\le (1+2\eta(1-\varepsilon))^T \Bigg(\frac{1+1/2\eta}{T+1}\Bigg)\\
    &\le \frac{(1+2\eta (1-\varepsilon))^T}{\eta (T+1)}.
\end{align*}
\end{enumerate}
Overall, the maximum value that $f$ can reach is less than $\max\{1, \frac{(1+2\eta (1-\varepsilon))^T}{\eta (T+1)}\}\le 1+ \frac{(1+2\eta (1-\varepsilon))^T}{\eta (T+1)}\}$. Hence the result.
\end{proof}
\bibliographystyle{amsplain}
\bibliography{StochPCA}
\end{document}